\newcommand{\ncm}{\newcommand}
\def\C{\mathbb{C}\,}
\newtheorem{theorem}{Theorem}[section]
\newtheorem{prop}[theorem]{Proposition}
\newtheorem{cor}[theorem]{Corollary}
\newtheorem{lem&def}[theorem]{Lemma \& Definition}
\newtheorem{exa}[theorem]{Example}
\newtheorem{remark}[theorem]{Remark}
\ncm{\End}{\mbox{\rm End}\,}
\def\Hom{\mbox{\rm Hom}\,}
\def\Im{\mbox{\rm Im}\,}
\def\id{\mbox{\rm id}}
\def\into{\hookrightarrow}
\def\to{\rightarrow}
\def\o{\otimes}    
\def\bra{\langle}
\def\ket{\rangle}
\ncm{\rarr}[1]{\stackrel{#1}{\longrightarrow}}
\ncm{\larr}[1]{\stackrel{#1}{\longleftarrow}}
\def\cop{\Delta}
\def\eps{\varepsilon}
\def\du1{\hat 1}
\def\-1{_{(-1)}}
\def\0{_{(0)}}
\def\1{_{(1)}}
\def\2{_{(2)}}
\def\3{_{(3)}}
\def\4{_{(4)}}
\def\|{\, | \,}
\def\du1{\hat 1}
\def\lact{\triangleright}
\begin{document}

\title{Semisimple Hopf algebras and their depth two Hopf subalgebras}
\author{Lars Kadison}
\address{Department of Mathematics \\ Louisiana State University \\ 
Baton Rouge, LA 70803 \\
and University of Pennsylvania \\
David Rittenhouse Labratory \\
209 South 33rd Street \\ Philadelphia, PA 19104-6395} 
\email{lkadison@math.lsu.edu lkadison@math.upenn.edu} 
\date{}
\thanks{}
\subjclass{16W30 (13B05, 20L05, 16S40, 81R50)}  
\date{\today} 

\begin{abstract} 
We prove   
 that a depth two Hopf subalgebra $K$ of a semisimple
Hopf algebra $H$ is normal (where the ground field $k$ is
algebraically closed of characteristic zero). 
This means on the one hand that a Hopf subalgebra is normal
when inducing (then restricting) modules several times as opposed to one time creates no new simple constituents.  This point of view was taken
in the paper \cite{KK} which established a normality result in case $H$ and $K$ are finite group algebras.   On the other hand this means
that $K$ is normal in $H$ when $H \| K$ is a Galois extension with respect to action of generalized bialgebras such as bialgebroids, weak Hopf algebras or Hopf algebroids.
The generalized Galois picture of depth two is the point of view we take here:  after showing the
centralizer $R$ is separable algebra via Hopf invariant theory, we compute that the depth two semisimple Hopf algebra pair $H \| K$ is free Frobenius extension with Markov trace satisfying all hypotheses considered in \cite[Kadison and Nikshych, Frobenius Extensions and Weak Hopf Algebras]{KN}.
By the main theorem in that paper it is then a Galois extension with action of semisimple weak Hopf algebra (also regular and possessing Haar integral).  Then the Galois canonical isomorphism (via coring theory) restricted to integral induces algebra homomorphism from  Hopf
algebra into weak Hopf algebra with kernel $HK^+ = K^+H$.    
\end{abstract} 
\maketitle

\section{Introduction}

Finite depth originated as a notion in the classification of type
$II_1$ subfactors.  Ocneanu saw in the late eighties that especially
depth two has extraordinary algebraic properties, which he phrased in
terms of paragroups.  A realization of this project in algebra occured
in stages starting with Szymanski and others, and proceeding with a score of  papers, of which \cite{NV, KN, KS, Belg, ferrara, LK, anchor}
are somewhat representative.  Critical input in the shaping of this algebraic theory came from 
results in Hopf-Galois extensions in the eighties
and early nineties, the development of weak Hopf algebras and Hopf algebroids in the mid- to late nineties (and the change of definition of antipode coming from consideration of depth two Frobenius extension
by B\"ohm-Szlachanyi).  

The depth two condition for a subgroup $H$ of a finite group $G$ is
in terms of irreducible characters $\chi \in {\rm Irr}(G)$ and $\psi \in {\rm Irr}(H)$, that there is positive integer $n$ such that $\langle {\rm Ind}_H^G ({\rm Res}_H^G ({\rm Ind}_H^G (\psi))) \|  \chi \rangle_G \le n \langle {\rm Ind}_H^G (\psi) \| \chi \rangle_G$. In the paper \cite[author-K\"ulshammer]{KK} it is shown via the Mackey subgroup theorem that this condition entails that $H$ is a normal subgroup of $G$.  

For general subrings the depth two property splits into a left- and right-handed depth two conditions (for associative unital rings and unit-preserving ring homomorphisms).  A subring $B$ is right depth two
in ring $A$ if there is a split $A$-$B$-bimodule epimorphism from
some $A^n$ onto $A \otimes_B A$ \cite{KS}. Finitely generated Hopf-Galois extensions satisfy
a particularly strong form of this condition, where isomorphism replaces
split epi.  

 It is not hard to check that for an 
equivalent right depth two condition on a ring extension $A \| B$, we impose the existence of $n$ mappings $\gamma_j \in \End {}_BA_B$ and $n$ elements $u_j \in (A \otimes_B A)^B$ such that
$x \otimes_B y = \sum_{j=1}^n x \gamma_j(y) u_j$ for all $x,y \in A$.  For example,
a normal subgroup $N$ of index $n$ in group $G$ induces
depth 2 subalgebra $B = k[N]$ in group algebra $k[G]$ with $u_j = g_j^{-1} \otimes_B g_j$, $\gamma_j(\sum_{g \in G} n_g g) = \sum_{h \in H} n_{hg_j} hg_j$ where $g_1,\ldots,g_n$ are coset representatives of $N$ in $G$.

Right depth two subrings $B \into A$ have a Galois theory over bialgebroid structure on $\End {}_BA_B$ and a dual bialgebroid structure on $(A \otimes_BA)^B$. Bialgebroids enjoy axioms like a bialgebra except over
a noncommutative base ring, which in the case of depth two subring is
the centralizer subring $A^B$; also duality is over this base ring,
which results in left- and right-handed bialgebroids and pairings \cite{KS}.   For example, the bialgebroids of a Hopf-Galois extension are smash products of $R$ with the Hopf algebra and its dual w.r.t.\ the Galois action and the
Miyashta-Ulbrich action.  The latter is a bialgebroid with antipode,
i.e. a Hopf algebroid, applying a  theorem of Lu inspired from Poisson geometry \cite{Belg}. 

Hopf algebroids with a separable base algebra are in fact weak Hopf algebras,
which is a remarkable self-dual notion weakening the notion of Hopf algebra
sufficiently for the purposes of type $II_1$-subfactors, conformal field
theory  and other subjects \cite{NV, BNS}.  In this paper we will see another application of
weak Hopf algebras, namely an application of the main theorem in \cite{KN}
in which certain depth two Frobenius extensions enjoy weak Hopf algebra Galois action and coaction.  In section~5 we prove that a depth two Hopf subalgebra of a semisimple Hopf algebra (over an algebraically closed field
of characteristic zero) is a normal Hopf subalgebra via a map from
 Hopf algebra to weak Hopf algebra.

The paper is organized as follows.  In section~2 we set up notation for Frobenius structures on a semisimple Hopf algebra $H$ and a Hopf subalgebra $K$, a free Frobenius extension with Markov trace, separable centralizer subalgebra, symmetry conditions
studied in \cite{KN} as we prove.  As a bonus  we give a ``Frobenius-approach'' proof of Masuoka's characterization of normal Hopf subalgebra. In section~3 we develop some facts about depth two subrings,
and show that a pair of characterizations of depth two for Frobenius extension are equivalent.  In section~4 we set up notation for weak Hopf algebras, their actions and coactions as well as smash product.  We show an
equivalence of the action and coaction point of view on Galois theory
for weak Hopf algebras \cite{CDG}.  In sections~3 and~4 we update the main theorem in \cite{KN} to the Galois coaction picture of depth two in \cite{ferrara} and to weak Hopf-Galois theory in \cite{CDG}.  In section~5 the main theorem that $K$ is a normal Hopf subalgebra of $H$ is proven by defining  a nonunital algebra homomorphism from $H$ into a weak Hopf algebra $W$ via the coaction $H \to H \otimes W$
and noting that this homomorphism has kernel $HK^+ = K^+ H$ via the Galois 
isomorphism $\beta: H \otimes_K H \into H \otimes W$.

\section{Hopf subalgebra of semisimple Hopf algebra}
The ground field $k$ of all algebras in this paper is algebraically
closed of characteristic zero.  In characteristic zero,  finite dimensional algebras
enjoy equality of the classes of semisimple, separable and even strongly
separable algebras.  An algebra is strongly separable (in the sense of
Kanzaki) if the separability idempotent may be chosen symmetric w.r.t.\
the flip operator. Over an algebraically closed field, semisimple
algebras are multi-matrix algebras (in the terminology of \cite{GHJ}).
A \textit{trace} $t$ on an algebra $A$ satisfies $t(ab) = t(ba)$ for all $a,b \in A$,
is \textit{normalized} if $t(1_A) = 1$ and \textit{nondegenerate} if $t(ab) = 0$ for all $b \in A$ implies $a = 0$.    Unadorned
tensors and hom-groups are to be understood
as being over $k$.  

Let $H$ be a semisimple Hopf algebra and
$K$ a Hopf subalgebra. Some elementary facts follow from the first three chapters in \cite{M}. The natural modules
$H_K$ and ${}_KH$ are free; we let
the rank be $[H:K] = n$.  There is a nonzero two-sided integral $\Lambda_H$ in $H$ with  counit value $\eps(\Lambda_H) = \dim_k H$ and paired with an integral $t_H$  in $H^*$ such that  $t_H(1_H) = 1$. By  Larson-Radford, we have  $t_H(\Lambda_H) = 1$, the dual Hopf algebra $H^*$ is semisimple and the
square of the antipode $S^2 = \id_H$, so $t_H$ is a trace on $H$.

 The Hopf subalgebra $K$ is necessarily  semisimple.  Let $t_K \in K^*$ and $\Lambda_K \in K$ be a dual pair of two-sided integrals for $K$, satisfying $t_K(\Lambda_K) = 1$,
$\eps(\Lambda_K) = \dim_k K$ and $t_K(1_H) = 1$. We note that the restriction of $t_H$ to $K$ is $t_K$, since the dual of the inclusion
Hopf algebra monomorphism 
$\iota: K \into H$ is the restriction
epimorphism $\iota^*: H^* \rightarrow K^*$ 
which preserves integrals. Moreover,
the integral $\Lambda_H$ factors into
$\Lambda_K$ times another element $\Lambda \in H$: 
\begin{equation}
\label{eq: lamb}
\Lambda_H =  \Lambda_K \Lambda.
\end{equation}
   
The trace $t_H$ on $H$ is a nondegenerate
trace with dual bases given by $\{ S(\Lambda^H\2) \}$, $\{ \Lambda^H\1 \}$, (we set $\Lambda^H = \Lambda_H$ to sidestep a notational horror) such that for all $a \in H$,
\begin{equation}
S(\Lambda^H\2) t_H(\Lambda^H\1 a) = a
\end{equation}
and a similar left-handed equation for $\id_H$.  The trace $t_H$
induces an $H$-bimodule (symmetric Frobenius) isomorphism
$H \cong H^*$ via $a \mapsto t_H \leftharpoonup a$, and similarly a left
module isomorphism.  (Notice that the Nakayama automorphism for a general Hopf algebra, $S^2$ with modular function acting, is the identity and plays no role for semisimple Hopf algebras: see \cite{FMS, NEFE} for details of Frobenius
structure.) 

Similarly $t_K$ is a nondegenerate trace
on $K$ and has dual bases tensor $ S(\Lambda^K\2) \otimes \Lambda^K\1 $
which is a Casimir element in $K \otimes K$.  In passing we note that $S(\Lambda^K\2)\Lambda^K\1 = \eps(\Lambda_K) \neq 0$ also implies that $K$ is a separable algebra.

By a theorem of Pareigis from 1964 two symmetric algebras like $H$ and $K$ where $H_K$ is finite projective, form a Frobenius extension.  Fishman, Montgomery and Schneider  compute the (beta!) Frobenius system
in full generality \cite{FMS}, which
restricted to our situation gives us
Frobenius homomorphism $E: H \rightarrow K$, where ($a \in H$)
\begin{equation}
E(a) = t_H(a\1 \Lambda_K) a\2
\end{equation}
with dual bases tensor in $H \otimes_K H$ given
by 
\begin{equation}
S(\Lambda\2) \otimes_K \Lambda\1.
\end{equation}
Note that multiplying together components above, 
\begin{equation}
\label{eq: index}
S(\Lambda\2)\Lambda\1 = \eps(\Lambda)
= \frac{\eps(\Lambda_H)}{\eps(\Lambda_K)}
= \frac{\dim H}{\dim K} = n.
\end{equation}

\begin{exa}
\begin{rm}
Let $k = \C$, $G > N$ be a subgroup pair of finite groups,
and consider the semisimple Hopf subalgebra pair $H = \C G \supseteq
\C N = K$.  The data in our set-up is 
$\Lambda_H = \sum_{g \in G} g$, $\Lambda_K = \sum_{h \in N} h$,
then $\Lambda = \sum_{i = 1}^n g_i$ where $n = [G:N]$ and the 
$g_i$ are left coset representatives of $N$ in $G$.  The dual semisimple
Hopf algebra $H^*$ is as usual denoted by $\oplus_{g \in G} \C p_g$
where $p_g p_h = \delta_{g,h} p_g$, $\eps(p_g) = \delta_{e,g}$ and $\cop(p_g) = \sum_{x \in G} p_{gx^{-1}} \otimes p_x$.  Then $t_H = p_e$, $t_K = p_e$ (restricted) and $K$-$K$-bimodule projection $E: H \rightarrow K$
simplifies to $E(g) = \sum_{h \in N} p_e(gh)g$ with dual bases
$\{ g_i^{-1} \}_{i=1}^n$ and $\{ g_i \}_{i=1}^n$.  
\end{rm}
\end{exa}

Recall the type of Frobenius extension in  \cite{KN} which in case it additionally is
depth two has reconstruction theorem
showing it is a weak Hopf-Galois extension.
For the sake of brevity we will call
a free Frobenius extension $A \| B$ of algebras
over a field a \textit{symmetric
Markov extension} if there is Frobenius
system $E: A \rightarrow B, \{ x_i \},
\{ y_i \}$ such that
$E(1) = 1$, $\sum_i x_iy_i = 1_A(\mbox{rank}\, A_B) = \sum_i y_i x_i$, the centralizer 
$R = A^B$ is a strongly separable algebra,
$Eu = uE$ for each $u \in R$ and 
there is a normalized nondegenerate trace $t_B$ on $B$
such that $t_A = t_B \circ E$ is a nondegenerate (normalized) trace on
$A$ as well as $t_A$ restricts to a nondegenerate trace on $R$.  
This may sound like a tall order outside of type $II_1$ subfactor theory, but
we have the following example of a symmetric Markov extension.
\begin{theorem}
A semisimple Hopf algebra  pair $H \| K$
is a symmetric Markov extension. In particular,
the centralizer is $R = C_H(K)$ is a separable
algebra.  
\end{theorem}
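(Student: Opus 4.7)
The plan is to verify, clause by clause, the definition of symmetric Markov extension for the pair $H\|K$, using the Frobenius data $(E,\{S(\Lambda\2)\},\{\Lambda\1\})$ already assembled above. The first few clauses are routine: $E(1) = t_H(\Lambda_K)\cdot 1_H = t_K(\Lambda_K) = 1$, since $t_H|_K = t_K$. The Markov-index identity $\Lambda\1 S(\Lambda\2) = \eps(\Lambda)\,1_H = n\,1_H$ is immediate from one antipode axiom applied to $\Lambda$; its mirror $S(\Lambda\2)\Lambda\1 = n\,1_H$ then follows by applying $S$ and invoking $S^2=\id_H$, available by Larson--Radford in semisimple $H$. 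Thus $\sum_i x_iy_i = \sum_i y_ix_i = n\cdot 1_H$.

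Next I would show that $R=C_H(K)$ is separable via Hopf invariant theory. Identify $R$ with the invariants $H^K$ under the left adjoint action $k\triangleright h := k\1\, h\, S(k\2)$; the equivalence between $k\triangleright h = \eps(k)h$ for all $k\in K$ and $kh = hk$ for all $k\in K$ is a direct antipode calculation. Since $K$ is semisimple, averaging against the normalized integral $\Lambda_K/\eps(\Lambda_K)$ produces a Reynolds projection $P\colon H\twoheadrightarrow R$ that splits the inclusion as a map of $R$-bimodules. Hence $R$ is a direct summand of the regular $R$-bimodule, so semisimple, and over our algebraically closed characteristic-zero field this is equivalent to being strongly separable.

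Third, the compatibility and trace conditions. The commutation ``$Eu=uE$ for $u\in R$,'' read as the identity $u\cdot E(a) = E(a)\cdot u$ of endomorphisms of $H$, is automatic here because $E(H)\subseteq K$ and $R = C_H(K)$ commutes elementwise with $K$. With $t_A := t_H$ and $t_B := t_K$ (both normalized and nondegenerate by the setup), the factorization $t_A = t_B\circ E$ reduces, since $E(h)\in K$ and $t_H|_K=t_K$, to the identity $t_H(E(h)) = t_H(h)$; I would verify this by inserting the formula $E(h) = t_H(h\1\Lambda_K) h\2$ and applying the dual basis identity~(2) together with $\Lambda_H = \Lambda_K\Lambda$.

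The main obstacle is the last clause: nondegeneracy of $t_H|_R$. The plan is to show that the Reynolds projection $P$ is self-adjoint with respect to the nondegenerate form $\beta(a,b) := t_H(ab)$ on $H$. Using cyclicity of $t_H$ together with the anti-coalgebra property of $S$ and $S^2 = \id$, one checks $\beta(k\triangleright a,\,b) = \beta(a,\,S(k)\triangleright b)$ for every $k\in K$; specializing to $k = \Lambda_K$ and invoking unimodularity $S(\Lambda_K) = \Lambda_K$ (valid in semisimple $K$) yields $\beta\circ(P\otimes\id) = \beta\circ(\id\otimes P)$. Consequently $H = R\oplus\ker P$ is a $\beta$-orthogonal decomposition, and the nondegeneracy of $\beta$ on $H$ descends to $\beta|_{R\times R}$, giving $t_H|_R$ nondegenerate on $R$ as required.
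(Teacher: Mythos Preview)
Your overall strategy is sound and several pieces are cleaner than the paper's, but there are two genuine gaps.

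First, your separability argument does not go through as written. The Reynolds operator $P(h)=\frac{1}{\eps(\Lambda_K)}\Lambda_K\triangleright h$ is indeed an $R$-bimodule retraction of $R\hookrightarrow H$, but ``$R$ is an $R$-bimodule direct summand of $H$'' does not by itself force $R$ to be semisimple: $H$ is not known to be projective as an $R\otimes R^{\mathrm{op}}$-module, and the phrase ``direct summand of the regular $R$-bimodule'' is either a slip or an unjustified claim that $H$ is free over $R^e$. The paper closes this gap differently: it observes that the adjoint $K$-action on $H$ is inner, so $H\#K\cong H\otimes K$ as algebras; the latter is semisimple, and $R\cong e(H\#K)e$ for the idempotent $e=1\#\Lambda_K$, whence $R$ has zero radical. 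You could also repair your route by invoking the Cohen--Fischman--Montgomery Morita context directly, but something beyond the bare existence of $P$ is needed.

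Second, your reading of ``$Eu=uE$'' is too weak. You interpret it as $u\,E(a)=E(a)\,u$, which is trivially true since $E(H)\subseteq K$ and $u\in C_H(K)$. The condition actually required (and what the paper proves) is $E(ua)=E(au)$ for all $a\in H$, obtained by pairing against $t_K$ and using that $t_H=t_K\circ E$ is a trace. Your trivial observation does not imply this.

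On the positive side, your argument for nondegeneracy of $t_H|_R$ is nicer than the paper's. The paper pulls in an external result of Goodman--de la Harpe--Jones to produce some nondegenerate trace on $H$ with nondegenerate restriction to $R$, then twists by a central unit. Your approach---showing $\beta(k\triangleright a,b)=\beta(a,S(k)\triangleright b)$ via cyclicity of $t_H$ and $S^2=\id$, then specializing to $k=\Lambda_K=S(\Lambda_K)$ to get $P$ self-adjoint and $H=R\perp\ker P$---is direct, self-contained, and avoids the external reference entirely.
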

\begin{proof}
We compute $E(1) = t_H(\Lambda_K) = t_K(\Lambda_K) = 1$.
We have seen above that the dual bases satisfy $S(\Lambda\2)\Lambda_1
= n1_A = \Lambda\1 S(\Lambda\2)$, that $t_K$ is a nondegenerate trace
on $H$ satisfying $t_K(1_H) = 1$.  By transitivity of Frobenius extension
as well as Frobenius systems (cf. \cite{KSt}), we note that
$t_K \circ E = t_H$ since each is a Frobenius homomorphism $H \rightarrow k$
with equal dual bases tensor: using eq.~(\ref{eq: lamb}), the dual bases tensor
for $t_K \circ E$ is 
$$ S(\Lambda\2)S(\Lambda^K\2) \otimes \Lambda^K\1 \Lambda\1 = 
S(\Lambda^H\2) \otimes \Lambda^H\1. $$
We note that $Eu = uE$ for every $u$ in the centralizer $R = C_H(K)$, since
($x \in K, a \in H$) 
$$ t_K(xE(ua)) = t_H(xua) = t_H(uxa) = t_H(xau) = t_K(x E(au)) $$
from which it follows from nondegeneracy of $t_K$ that $E(au) = E(ua)$
for all $a \in H$.  

We now claim that the centralizer $R$ is a semisimple algebra.  
Note that $R$ is the invariant subalgebra of the left $K$-module
algebra structure on $H$  given by the left adjoint action of $K$ on $H$:
($x \in K, a \in H$) 
\begin{equation}
x \lact a = x\1 a S(x\2)
\end{equation}
Of course, if $r \in R$, $x \lact r = x\1 S(x\2) r = \eps(x)r$,
so $R \subseteq H^K$.  Conversely, if $y \in H^K$, then for each
$x \in K$,
$$xy = x\1 y S(x\2) x\3 = y \eps(x\1)x\2 =  yx $$
whence $ y \in R$.  

Now form the smash product algebra $H \# K$ on $H \otimes K$ with
multiplication given by
\begin{equation}
(a \otimes x)(b \otimes y) = a x\1 b S(x\2) \otimes x\3 y
\end{equation}
Consider the Morita context studied by Cohen, Fishman,
Montgomery and others \cite{M} which in our case relates the centralizer $R$
and the smash product $H \# R$ via one pairing $[,]: H \otimes_R H \rightarrow H \# K$ given by $$[a,b] = a \Lambda_K b = a (\Lambda^K\1 \lact
b) \otimes \Lambda^K\2 $$
and another pairing $(,): H \otimes_{H \# K} H \rightarrow R$
given by $$(a,b) = \Lambda_K \lact (ab).$$ The pairing $(,)$ is surjective since 
 $(1_H,1_H) = (\dim K)1_H$.  

Next we observe the action of $K$ on $H$ to be inner
and the result \cite[7.3.3]{M} shows
that there is an algebra isomorphism  between the smash product and tensor product algebra, $H \# K \cong H \otimes K$, 
via the mapping $a \# x \mapsto ax\1 \otimes x\2$ with 
inverse $a \otimes x \mapsto a S(x\1) \# x\2$.  But
$H \otimes K$ is obviously a separable algebra as the tensor product
of two separable algebras. It then has trivial radical ideal $J = 0$.
By \cite[Lemma 4.3.4]{M} the idempotent $e = 1_H \# \Lambda_K$ satisfies
$R \cong Re \cong e(H \# K)e$, as part of a Morita context with surjective
trace map.  Then the centralizer $R$ has radical ideal equal
to $eJe = 0$ by \cite[Lam, ch.\ 21]{L}.  Hence $R$ is semisimple,
indeed a strongly separable, multi-matrix $k$-algebra.

Finally we claim that $t_H$ restricted to the centralizer $R$ is a 
nondegenerate trace.  
By \cite[Goodman-de la Harpe-Jones, Prop.\ 2.5.1]{GHJ} 
there is a nondegenerate trace $\phi$ on $H$ with nondegenerate restriction to $R$.  Note that $\phi = t_H(d-)$ for some unit $d \in Z(H)$, the center
of $H$, since $H$ is a symmetric algebra. Since $\phi$ is nondegenerate
when restricted to $R$, given $r \in R$, there is $r' \in R$ such
that $\phi(rr') \neq  0 $, so $t_H(drr') = t_H(rr'd) \neq 0$,
which shows that also $t_H$ restricts to a nondegenerate trace on $R$.
\end{proof}

\subsection{Normal Hopf Subalgebras}
A Hopf subalgebra $K$ is normal in a semisimple Hopf algbra $H$ if $K$ is invariant w.r.t.\ the right adjoint action:  $S(a\1)K a\2 \subseteq K$ for each $a \in H$.  Normality of $K$ may also be characterized by $K$ being invariant under the left adjoint action.  A third characterization is that $HK^+ = K^+H$ as subsets of $H$, where $K^+ = \ker \eps_K$.
Then $HK^+$ is a Hopf ideal and $H$ is
a Hopf-Galois extension of $K$ w.r.t.\
the quotient Hopf algebra $H/ HK^+$,
 denoted by $H /\!/ K$ in \cite{SB,SB2}.  
Indeed being a Hopf-Galois extension characterizes normal Hopf subalgebra  \cite[Theorem 3.1]{ferrara}.

Below we provide a Frobenius-approach
(\`a la \cite{KSt}) proof for
Masuoka's characterization of normality
in the notation we constructed above.

\begin{prop}
Suppose $H$ is a semisimple Hopf algebra.
Then a Hopf subalgebra $K$ is normal if
and only if its integral $\Lambda_K$ is central in $H$.
\end{prop}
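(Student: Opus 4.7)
The plan is to prove both directions by passing through the equivalent characterization of normality, $HK^+ = K^+H$, and exploiting the element $e := \Lambda_K/\eps(\Lambda_K) \in K$. This $e$ is a well-defined idempotent (since $\Lambda_K^2 = \eps(\Lambda_K)\Lambda_K$), and is fixed by $S$ because any two-sided integral in a semisimple Hopf algebra is $S$-invariant (by uniqueness up to scalar and normalization by $\eps$).

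For $(\Rightarrow)$, suppose $K$ is normal. The three-fold coproduct identity $ak = (a_{(1)}kS(a_{(2)}))a_{(3)}$ shows $HK^+ \subseteq K^+H$: its leading factor lies in $K$ by normality and has counit $\eps(a)\eps(k) = 0$ when $k \in K^+$; applying the antipode and using $S(HK^+) = K^+H$ (always valid, since $S$ is antimultiplicative with $S(K^+) = K^+$, $S(H) = H$) gives the reverse inclusion. Now, for any $a \in H$, the element $c := a_{(1)}\Lambda_K S(a_{(2)})$ lies in $K$ by normality, and for each $y \in K^+$ we have $ya_{(1)} \in K^+H = HK^+$, so $yc = (ya_{(1)})\Lambda_K S(a_{(2)}) = 0$ because $K^+\Lambda_K = 0$. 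Hence $c$ is a left integral in $K$, so $c = \lambda\Lambda_K$ for some $\lambda \in k$, and evaluating $\eps$ yields $\lambda = \eps(a)$. Finally, $\Lambda_K a = \eps(a_{(1)})\Lambda_K a_{(2)} = a_{(1)}\Lambda_K S(a_{(2)}) a_{(3)} = a_{(1)}\eps(a_{(2)})\Lambda_K = a\Lambda_K$ establishes centrality.

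For $(\Leftarrow)$, suppose $\Lambda_K$ is central, making $e$ a central idempotent of $H$. I claim $HK^+ = (1-e)H$. Inclusion $\subseteq$: for $h \in H$ and $k \in K^+$, $e(hk) = h\Lambda_K k/\eps(\Lambda_K) = 0$ by centrality and $\Lambda_K k = 0$, so $hk = (1-e)hk \in (1-e)H$. For equality I would compare dimensions. By freeness of $H$ as a right $K$-module of rank $n = [H:K]$, $\dim HK^+ = \dim H - n$. On the other hand, $\dim eH = \tr(R_e|_H) = \tr(L_{\Lambda_K}|_H)/\eps(\Lambda_K)$, using centrality to equate left- and right-multiplication traces; decomposing $H \cong K^{\oplus n}$ as a free left $K$-module, $L_{\Lambda_K}$ preserves each summand and acts there as $L_{\Lambda_K}|_K$, whose trace is $\eps(\Lambda_K)$ because $L_{\Lambda_K}/\eps(\Lambda_K)$ is the rank-one projection $K \to k\Lambda_K$, $k \mapsto \eps(k)\Lambda_K/\eps(\Lambda_K)$ (trivially read off the splitting $K = k\Lambda_K \oplus K^+$). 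Hence $\dim eH = n$, and $HK^+ = (1-e)H$ by dimension match. Since $1-e$ is central with $S(1-e) = 1-e$, the ideal $(1-e)H$ is $S$-stable; combined with the always-valid identity $S(HK^+) = K^+H$, this forces $HK^+ = S(HK^+) = K^+H$, i.e., $K$ is normal.

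The main obstacle is the converse direction's dimension count, specifically the identification of $(1-e)H$ with $HK^+$. This rests on the trace identity $\tr(L_{\Lambda_K}|_H) = n\cdot\eps(\Lambda_K)$, which uses the centrality of $\Lambda_K$ to interchange $L$- and $R$-traces together with the rank-one projection structure of $L_{\Lambda_K}$ on $K$; once the identification is in hand, antipode invariance of $(1-e)H$ and the relation $S(HK^+) = K^+H$ immediately deliver normality.
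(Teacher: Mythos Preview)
Your proof is correct, and it takes a genuinely different route from the paper's. For $(\Rightarrow)$, both you and the paper establish $a\1\Lambda_K S(a\2) = \eps(a)\Lambda_K$ and then deduce centrality by the same three-line coproduct manipulation; but whereas the paper computes this identity via the Frobenius trace $t_K$ and its dual bases $S(\Lambda^K\2)\otimes\Lambda^K\1$, you argue structurally: you first derive $HK^+ = K^+H$ from left-adjoint invariance, then show $c := a\1\Lambda_K S(a\2)\in K$ is annihilated by $K^+$ on the left (hence is a left integral), and invoke one-dimensionality of the integral space. For $(\Leftarrow)$, the paper uses the Frobenius conditional expectation $\overline{E} = S\circ E\circ S$ to check $\overline{E}(a\1 x S(a\2)) = a\1 x S(a\2)$ directly, giving $a\1 K S(a\2)\subseteq K$; you instead identify $HK^+$ with the two-sided ideal $(1-e)H$ cut out by the central idempotent $e = \Lambda_K/\eps(\Lambda_K)$, via a dimension count combining Nichols--Zoeller freeness with the trace of the rank-one projection $L_{\Lambda_K}|_K$, and then use $S$-stability of $(1-e)H$ together with $S(HK^+) = K^+H$ to conclude. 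Your approach is more elementary in that it never touches the Frobenius data $t_H$, $t_K$, $E$, relying only on integral uniqueness, freeness, and the ideal characterization of normality; the paper's argument is shorter once that machinery is in place and meshes with the Frobenius-extension theme of the surrounding sections.
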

\begin{proof}
($\Rightarrow$)  Given $a \in H$ we first
note that $a\1 \Lambda_K S(a\2) = \eps(a)\Lambda_K$.  We compute using the
Frobenius system $(t_K \in \int_{K^*}, \,
S(\Lambda^K\2), \, \Lambda^K\1)$
and the fact we noted about the traces,  $\iota^*(t_H) = t_K$:
\begin{eqnarray*}
a\1 \Lambda_K S(a\2) & = & t_K (a\1 \Lambda_K S(a\2) S(\Lambda^K\2)) \Lambda^K\1 \\
& = & t_H(\Lambda_K S(S(a\1)\Lambda^K\2 a\2)) \Lambda^K\1 \\
& = & t_K(\Lambda_K) \eps(a) \eps(\Lambda^K\2) \Lambda^K\1 = \eps(a) \Lambda_K
\end{eqnarray*}
since $\eps \circ S = \eps$ and $K$ is
normal Hopf subalgebra. 

Then $$a \Lambda_K = a\1 \Lambda_K S(a\2) a\3 = \Lambda_K \eps(a\1) a\2 = \Lambda_K a $$
hence $\Lambda_K$ is central in $H$.

($\Leftarrow$) Recall the Frobenius homomorphism $E : H \to K$ defined
by $E(a) = t_H(a\1 \Lambda_K) a\2$.
We next apply the mapping $\overline{E} = S \circ E \circ S: H \to K$,
which then satisfies $\overline{E}(a)
= a\1 t_H(\Lambda_K a\2)$,
since $S(\Lambda_K) = \Lambda_K$
and $t_H \circ S = t_H$.
 Let $a \in H$,
$x \in K$, then
\begin{eqnarray*}
\overline{E}(a\1 x S(a\2)) & = & a\1 x\1 S(a\4) t_H(\Lambda_K a\2 x\2 S(a\3)) \\
& = & a\1 x\1 S(a\4) t_H(a\2 \Lambda_K x\2 S(a\3)) \\
& = & a\1 x S(a\2) t_H(\Lambda_K) = a\1 x S(a\2)
\end{eqnarray*}
whence $a\1 K S(a\2) \subseteq K$ and $K$
is normal. 
\end{proof}   

\section{Depth Two Extensions}
     
A subring $B$ of a ring $A$ is  right depth two (D2) if the natural bimodules
${}_AA_B$ and ${}_BA_B$ satisfy the following condition:  the tensor
product bimodule $A \otimes_B A$ is $A$-$B$-bimodule isomorphic to
a direct summand of a finite direct sum of $A$ with itself.
Left depth two ring extension $A \| B$ is defined similarly:  briefly
the defining condition in suggestive symbols is given by
\begin{equation}
\label{eq: D2}
A \otimes_B A \oplus\, * \, \cong A^n
\end{equation}
as natural $B$-$A$-bimodules for some positive integer $n$.  Notice that $A \| B$ is right D2
if and only if the opposite rings $A^{\rm op} \| B^{\rm op}$ are left
D2.  

\subsection{Projectivity} 
Although a projectivity-type condition, the right D2 condition for
algebras over a ground ring does not mean that $A \otimes_B A$ is a projective
$A \otimes B^{\rm op}$-module.  However, this is so if $B$ is a separable
algebra, the situation we have in this paper:

\begin{prop}
Suppose $A \| B$ is right depth two algebra extension.  Then $A \otimes_B A$ is projective if and only if $A$ is projective, both as natural $A \otimes B^{\rm op}$-modules. If $B$ is a separable algebra, then $A$ and $A \otimes_B A$ are projective.     
\end{prop}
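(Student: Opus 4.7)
The plan is to sandwich $A$ and $A \otimes_B A$ as mutual direct summands in the category of $A$-$B$-bimodules (equivalently, of left $A \otimes B^{\rm op}$-modules), and then invoke the standard fact that direct summands inherit projectivity.

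First I would record two bimodule splittings. By the right D2 hypothesis (\ref{eq: D2}), $A \otimes_B A$ is a direct summand of $A^n$ as an $A$-$B$-bimodule. On the other hand, the multiplication map $\mu: A \otimes_B A \to A$ is split as $A$-$B$-bimodules by $s(a) = a \otimes_B 1_A$: left $A$-linearity is immediate, right $B$-linearity uses $ab \otimes_B 1 = a \otimes_B b$ in $A \otimes_B A$ for $b \in B$, and $\mu \circ s = \id_A$. Hence $A$ is a direct summand of $A \otimes_B A$, which in turn is a direct summand of $A^n$. Consequently, $A$ is projective as an $A \otimes B^{\rm op}$-module if and only if $A \otimes_B A$ is, which settles the first clause.

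For the separability statement, by the first clause it suffices to exhibit $A$ itself as projective. I would view $A \otimes B$ as the free $A$-$B$-bimodule on one generator and split its multiplication map $\pi: A \otimes B \to A$, $a \otimes b \mapsto ab$, using a separability idempotent $e = \sum_i e_i \otimes e_i' \in B \otimes B$ for $B$ (so $\sum_i e_i e_i' = 1_B$ and $be = eb$ for all $b \in B$). Setting $\sigma(a) = \sum_i a e_i \otimes e_i'$, left $A$-linearity is transparent, right $B$-linearity follows from $eb = be$ applied inside $A \otimes B$, and $\pi \circ \sigma = \id_A$ because $\sum_i e_i e_i' = 1_B$. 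This exhibits $A$ as a direct summand of a free $A \otimes B^{\rm op}$-module, hence projective, and by the first clause $A \otimes_B A$ is projective as well.

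The argument is essentially two short splittings tied together, so no serious obstacle is expected; the only sub-verification that warrants care is the right $B$-linearity of the separability section $\sigma$, which reduces directly to the centralizing identity $eb = be$ of the separability idempotent.
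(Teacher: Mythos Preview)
Your proof is correct and follows essentially the same approach as the paper: both use the split multiplication map $\mu$ together with the D2 splitting for the equivalence, and both use the separability idempotent to split the canonical epi from a free $A\otimes B^{\rm op}$-module. The only cosmetic difference is that for the separable case you first show $A$ is projective (splitting $A\otimes B\to A$) and then invoke the first clause, whereas the paper directly exhibits $A\otimes_B A$ as a summand of $A\otimes B^n$ by composing the separability splitting with the D2 splitting $A^n\to A\otimes_B A$; the underlying idea is the same.
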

\begin{proof}
The first statement follows the observation that the multiplication
mapping $\mu: A \otimes_B A \to A$, defined by $\mu(x \otimes y) = xy$
is a split $A$-$B$-epimorphism, so the module $A$ enjoys the projectivity
property if $A \otimes_B A$ does. The converse follows from the eq.~(\ref{eq: D2}).  If $B$ is separable algebra, we have two successive
split epis $A \otimes B^n \to A \otimes_B B^n \cong A^n \to A \otimes_B A$,
where the first is split by $(a_1,\ldots,a_n) \mapsto (a_1e,\ldots,a_ne)$
for some separability idempotent, $e \in B \otimes B$ where $e^1 e^2 = 1$,
in a Sweedler notation suppressing a summation over simple tensors. 
It follows that $A \otimes_B A$ is $A$-$B$-bimodule isomorphic to a direct
summand of a free rank $n$ $A \otimes B^{\rm op}$-module. 
\end{proof} 

There is a converse for a ring extension $A$ over Azumaya algebra $B$,
in which case $A \| B$ is necessarily right D2 \cite[Theorem 2.3]{anchor}.   
  
\subsection{D2 Quasibases} Left depth two extension $A \| B$ is  characterized by the existence of 
$n$ mappings $\beta_i \in \End {}_BA_B$ and $n$ central elements
$t_i \in (A \otimes_B A)^B$ such that
\begin{equation}
\label{eq: d2qb}
x \otimes_B y = \sum_{i=1}^n t_i \beta_i(x)y 
\end{equation}
for all $x, y \in A$.  Given this data, a split epi $A^n \to A \otimes_B A$
is then given by $(a_1,\ldots,a_n) \mapsto \sum_i t_i a_i$,
with section $A \otimes_B A \to A^n$ given by $x \otimes_B y \mapsto
(\beta_1(x)y, \ldots, \beta_n(x)y)$.  

Note two things from the last display equation and its right depth two variant.  First, if $B$ is in the center
of $A$, and $A$ is finite projective over
$B$, then left or right D2 quasibases are
easily formed from the projective bases.
Thus, any finite projective algebra is depth two extension of its scalars.  
Second, the centralizer $R= A^B$ of $B$ in $A$ enjoys special properties
such as for every two-sided ideal $I$ of $A$ its restriction to $R$
is $A$-invariant: $A(I \cap R) = (I \cap R) A$. This equality breaks
up into reverse inclusions for left or right depth two extensions. It
is not known if there is an example of a one-sided depth two extension.
However, it is shown in \cite{CK} that quasi-Frobenius extension $A \| B$ is left D2
if and only if it is right D2.

\begin{exa}
\begin{rm}
A first example of depth two extension when looking at eq.~(\ref{eq: D2})
(and thinking of a trivial complementary bimodule $*$) 
is a Hopf-Galois extension $A \| B$. Let $H$ be the Hopf algebra,
$A$ a right $H$-comodule algebra with coinvariant subalgebra $B$. 
For then canonical or Galois mapping
$\beta: A \otimes_B A \stackrel{\cong}{\longrightarrow} A \otimes H$,
$\beta(x \otimes_B y) = xy\0 \otimes y\1$, where $x \mapsto x\0 \otimes x\1$
is the coaction of a right $H$-comodule algebra structure on $A$,
is an $A$-$A$-bimodule isomorphism of $A \otimes_B A \cong A \otimes H$ where $A \otimes H$ has right $A$-module structure given by $(a \otimes h) \cdot c = a c\0 \otimes hc\1$.  Then $\beta$ restricts to an isomorphism
of the natural $A$-$B$-bimodules $A \otimes_B A \cong A^n$ where 
$n = \dim H$. Twisting by a bijective antipode to obtain the equivalent
Galois mapping $\beta'(x \otimes_B y) = x\0 y \otimes x\1$ would
show $A$ is left D2 as well.   
\end{rm}
\end{exa}

\begin{exa}
\label{exa-nha}
\begin{rm}
A subexample is a normal Hopf subalgebra $K$ of a Hopf algebra $H$.
Here is another way to see that $H$ is depth two over $K$. Dualize the
notion of depth two algebra homomorphism to that of codepth two (coD2) 
coalgebra homomorphism via natural bicomodules and cotensor product
replacing bimodules and tensor product in eq.~(\ref{eq: D2})
(cf. \cite[section 5]{anchor}, a perfect duality in case
algebras are finite dimensional). Now let
$L$ be the quotient Hopf algebra $H // K := H/HK^+$. Then the quotient epi $\pi: H \to L$ is  by \cite[Cor.\ 5.4]{anchor} codepth two. Then $L^* \into H^*$ is D2
by duality \cite[Theorem 5.5]{anchor}. Also $L^*$ is normal by \cite[Remark 2.1]{SB}.  Again the quotient epi $H^* \to H^* // L^*$
is coD2, whence $K \cong (H^* // L^*)^* \into H = H^{**}$ is D2
by duality and \cite[Remark 2.1]{SB}. 
\end{rm}
\end{exa}

\begin{exa}
\begin{rm}
A subexample is a finite group $G$ with a normal subgroup $H$,
where $A$ and $B$ are the complex group algebras over $G$ and $H$ respectively.  We derive the right depth two property for $A \| B$
starting with a well-known induction formula:
\begin{equation}
M \uparrow^G_H \downarrow^G_H = \oplus_{i=1}^n {}^{g_i}M
\end{equation}
where $M$ is left $H$-representation space, $G = \coprod_{i=1}^n g_iH$
is an $H$-coset decomposition of $G$, and ${}^{g_i}M$ denotes the twisted module given by $g \cdot m = g_i^{-1}g g_i m$.  

Let $M = B$, then $M \uparrow^G_H = A \otimes_B B \cong A$.
From the displayed equation, we derive $M \uparrow^G_H \downarrow^G_H \uparrow^G_H  = \oplus_{i=1}^n {}^{g_i}M\uparrow^G_H $, whence
$A \otimes_B A \cong \oplus_i A \otimes_B {}^{g_i}B$ as left $A$-modules,
but also right $B$-modules by naturality.  But $A \otimes_B {}^{g_i}B \cong
A$ via $a \otimes a' \mapsto ag_i a' g_i^{-1}$ with inverse $a \mapsto
a \otimes e$. This isomorphism is obviously left
$A$-linear, but also right $B$-linear.
 Hence ${}_A A \otimes_B A_B \cong {}_A A_B^n$.
\end{rm}
\end{exa}

\subsection{Frobenius extensions}
We now show that the depth two condition above is equivalent to the
older depth two condition in \cite[Kadison-Nikshych]{KN} for Frobenius
extension.  Recall that a Frobenius extension $A \| B$ has naturally
isomorphic functors of coinduction and induction, which applied to
the regular representation given the characterization $A_B$ is finite
projective and $ A \cong \Hom (A_B, B_B)$ as natural $B$-$A$-bimodules.
A cyclic generator of $\Hom (A_B, B_B)$ is denoted by $E: A \to B$,
a ``Frobenius homomorphism'' which is in fact left $B$-linear as well
and posesses dual bases $\{ x_i \}_{i=1}^n$ and $\{ y_i \}_{i=1}^n$:
we have the following identities of $A$-central Casimir elements, 
\begin{equation}
\sum_i x_i \otimes Ey_i = \id_A = \sum_i x_iE \otimes y_i
\end{equation}
in the standard identification isomorphisms $A \otimes_B \Hom (A_B, B_B) \cong 
\End A_B $ and $ \End {}_BA \cong \Hom ({}_BA, {}_BB) \otimes_B A $.
One deduces that in fact ${}_BA$ is finite projective and ${}_AA_B  \cong
\Hom ({}_BA, {}_BB)$ is another characterization of Frobenius extension.
Frobenius homomorphisms and their dual bases are in one-to-one correspondence with units in the centralizer $R = A^B$ via $E \mapsto Er$
and $\sum_i x_i \otimes_B y_i \mapsto \sum_i x_i  \otimes_B r^{-1}y_i$ \cite{NEFE}.  

The old definition of depth two was a condition on centralizers in the
tower of a Frobenius extension. A Frobenius extension $A \| B$ has
isomorphic tensor-square and endomorphism ring, $A \otimes_B A \cong \End A_B$ via $x \otimes y \mapsto \lambda_x \circ E \circ \lambda_y$,
where $\lambda_x$ denotes left multiplication on $A$ by $x \in A$.
The inverse mapping is given by $f \mapsto \sum_i f(x_i) \otimes_B y_i$.
  Then
$e_1 = 1_A \otimes_B 1_A$ is a cyclic generator of $A_1 = A \otimes_B A$
such that $A_1 = Ae_1 A$ and $e_1 a e_1 = E(a)e_1 = e_1 E(a)$. 
The classical endomorphism ring theorem notes that $A_1$ is a Frobenius
extension of $A$, in fact with Frobenius homomorphism $E_A = \mu: A_1 \to A$
given by $E_A(xe_1y) = xy$, with dual bases $\{ x_ie_1 \}^n_{i=1}$ and
$\{ e_1 y_i \}^n_{i=1}$.  

We note that the centralizer $\End {}_BA_B \cong A_1^B$ via $\alpha \mapsto \sum_i \alpha(x_i) e_1 y_i$.  We denote $A_1^B = T$.  
We repeat the application of the endomorphism ring theorem to
obtain $A_2 = A_1 e_2 A_1$ with Frobenius homomorphism
$E_{A_1}: A_2 \to A_1$ and centralizer $S = A_2^A \cong \End {}_AA \otimes_B A_A$.
This obtains a tower of intermediate rings
$B \into A \into A_1 \into A_2$ nested
in $A_2$.  

\begin{theorem}[\cite{KS}, \cite{KK}]
Suppose $A\| B$ is a Frobenius extension with data above.
The following two conditions are equivalent:
\begin{enumerate}
\item $A$ is depth two extension of $B$
\item The Frobenius homomorphism $E_A$ has dual bases in $T = A_1^B$
and the Frobenius homomorphism $E_{A_1}$ has dual bases in $S = A_2^A$.  
\end{enumerate}
\end{theorem}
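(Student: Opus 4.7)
The plan is to exploit the canonical Frobenius isomorphism $T = A_1^B \cong \End {}_BA_B$, sending $t = \sum t^1 \otimes_B t^2 \in T$ to $[a \mapsto \sum t^1 E(t^2 a)]$ with inverse $\phi \mapsto \sum_i \phi(x_i) \otimes_B y_i$. This will let me transport freely between elements of $T$ and $B$-bilinear endomorphisms of $A$, which is exactly what is needed to match D2 quasi-bases with Frobenius dual bases for $E_A$ in $T$; throughout I use the ring structure $(u \otimes v)(u' \otimes v') = u E(vu') \otimes v'$ on $A_1$ induced by $A_1 \cong \End A_B$.

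For $(2) \Rightarrow (1)$, suppose $E_A$ has dual bases $\{P_j\}, \{Q_j\} \subset T$. Writing $Q_j = \sum_k q^1_{j,k} \otimes_B q^2_{j,k}$, define $\beta_j \in \End {}_BA_B$ to be the image of $Q_j$ under the isomorphism, so $\beta_j(a) = \sum_k q^1_{j,k} E(q^2_{j,k} a)$; its right $B$-linearity comes from right $B$-linearity of $E$, and its left $B$-linearity from $B$-centrality of $Q_j$. For $\alpha = x \otimes_B y \in A_1$, computing in the ring $A_1$ gives $Q_j \alpha = \beta_j(x) \otimes_B y$, hence $E_A(Q_j \alpha) = \beta_j(x) y$. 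The dual-basis equation $\alpha = \sum_j P_j E_A(Q_j \alpha)$ then becomes $x \otimes_B y = \sum_j P_j \cdot \beta_j(x) y$, which is exactly the left D2 quasi-basis equation~(\ref{eq: d2qb}) with $t_j := P_j$.

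For $(1) \Rightarrow (2)$, given D2 quasi-bases $\beta_i \in \End {}_BA_B$ and $t_i \in T$, set $\tau_i := \sum_l \beta_i(x_l) \otimes_B y_l \in T$ (the image of $\beta_i$ in $T$ under the inverse isomorphism). For $\alpha = x \otimes_B y$, multiplying in $A_1$ and applying the standard dual-basis identity $\sum_l x_l E(y_l x) = x$ together with $B$-bilinearity of $\beta_i$ gives $\tau_i \alpha = \beta_i(x) \otimes_B y$, whence $E_A(\tau_i \alpha) = \beta_i(x) y$ and the D2 formula yields $\sum_i t_i E_A(\tau_i \alpha) = x \otimes_B y = \alpha$, i.e.\ one side of the dual-basis property for $E_A$ in $T$. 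The other side is obtained by the analogous calculation using the right D2 quasi-basis formula $x \otimes_B y = \sum_j x \gamma_j(y) u_j$, which exists because $A\|B$ is Frobenius hence quasi-Frobenius, so left D2 $\Leftrightarrow$ right D2 by \cite{CK}. The second half of (2), about $E_{A_1}$ having dual bases in $S = A_2^A$, is obtained by applying the same argument one step up the Frobenius tower, using that D2 propagates from $A\|B$ to $A_1\|A$ through the endomorphism ring theorem.

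The main obstacle is the careful bookkeeping required to juggle three products at once --- the tensor product $\otimes_B$, the ring product on $A_1$, and the $A$-$B$-bimodule actions on $A_1$ --- together with the verification that the $\beta_j$'s inherit $B$-bilinearity from the $B$-centrality of $T$. A secondary issue is confirming that D2 really does propagate one step up the Frobenius tower, which is needed for the $E_{A_1}$ half of condition (2); this is standard from \cite{KS} but must be cited cleanly to complete the reduction.
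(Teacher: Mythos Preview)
Your proposal is correct and follows essentially the same route as the paper: both arguments transport between D2 quasibases and dual bases for $E_A$ via the identification $T=A_1^B\cong\End{}_BA_B$, use right D2 quasibases for the other dual-basis equation, and invoke the endomorphism-ring theorem (in the paper, \cite{LK}) to propagate D2 to $A_1\|A$ for the $E_{A_1}$ claim. The only cosmetic difference is that the paper notes explicitly that the $E_{A_1}$ hypothesis is redundant in the $(2)\Rightarrow(1)$ direction, whereas you simply never use it.
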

\begin{proof} ($\Rightarrow$)  Let $\beta_i$ and $t_i$ be the left
depth two quasibases introduced above.  Note that $t_i = t_i^1 e_1 t^2_i
\in A_1^B$ as is $\sum_{j=1}^n \beta_i(x_j) e_1 y_j \in A_1^B$ for
each $i = 1,\ldots, n$.  Compute these to be dual bases for $E_A$:
given $x e_1 y \in A_1$, we obtain
\begin{eqnarray*}
\sum_{i,j} t_i^1 e_1 t_i^2 E_A(\beta_i(x_j) e_1 y_j x e_1 y) & = &
\sum_{i,j} t_i E_A(\beta_i(x_j) E(y_jx) e_1 y) \\
 & = & \sum_i t_i \beta_i(x)y = xe_1 y
\end{eqnarray*}
Similarly right D2 quasibases $\gamma_k \in \End {}_BA_B$ and
$u_k \in (A \otimes_B A)^B$ yield dual bases $\sum_{k,j}
x_j e_1 \gamma_k(y_j) \otimes_A u_k^1 e_1 u_k^2 $ for $E_A$.  

To see that $E_{A_1}$ has dual bases in $S$ apply the endomorphism ring theorem for depth two extensions in \cite{LK}, which shows that $\End A_B \cong A_1$ is necessarily depth two over $A$. With depth two quasibases
for $A_1$ over $A$  and $A_2 = A_1 e_2 A_1$ defined by $E_A$-multiplication,
we iterate the argument in the last paragraph to prove that $E_{A_1}$ too
has dual bases in the centralizer $A_2^A$.

($\Leftarrow$)  Suppose $\{ c_j \}$
and $\{ d_j \}$ are dual bases in $A_1^B$ of $E_A: A_1 \to A$.  Then 
\begin{eqnarray*}
xe_1y & = & \sum_j c_j^1 e_1 c_j^2 E_A(d_j^1 e_1 d_j^2 x e_1 y) \\
& = &  \sum_j c_j^1 e_1 c_j^2 d_j^1 E(d_j^2 x) y 
\end{eqnarray*}
is a left D2 quasibase equation (cf. eq.~\ref{eq: d2qb}) for
quasibases $c_j^1 \otimes_B c_j^2 \in (A \otimes_B A)^B$ and $d_jE(d_j^2 -) \in 
\End {}_BA_B$. Hence $A \| B$ is left
D2. We similarly show from the other dual bases equation that it is right
D2 (with right D2 quasibases
$\{ d_j \}, \{ E(- c_j^1)c_j^2 \}$.
Notice that the condition on $E_{A_1}$
is redundant.  
\end{proof}

\section{Weak Hopf-Galois extensions}

The main point to the last two sections is that a semisimple
Hopf algebra $H$ with Hopf subalgebra $K$ forms a symmetric Markov
extension, and imposing the depth two condition in any one of its algebraic
characterizations, $H \| K$ is a particular case of the object of study
in the article \cite[Kadison-Nikshych]{KN}.  The main theorem in this paper (\cite[Theorem 4.6]{KN}) 
applied to $H \| K$ tells us that the centralizer $W = H_1^K$ in the Jones
tower $K \into H \into H_1$ is a semisimple weak Hopf algebra (regular and with Haar integral), that there is a $W$-module algebra structure on $H$
with $K$ as the subalgebra of invariants such that the endomorphism algebra is isomorphic to the smash product algebra, $H_1 \cong \End H_K \cong H \# W$. In this section we review smash products of module algebras with
their weak Hopf algebras and why the smash product condition on the endomorphism ring is explicitly equivalent to the existence of a Galois
isomorphism.  We will need the Galois isomorphism for the main theorem in
the next section.

Let $k$ be a field. A weak bialgebra
$W$ is  a finite dimensional $k$-algebra and $k$-coalgebra $(W, \cop, \eps)$
such that the comultiplication $\cop: W \to W \o_k W$ is linear and multiplicative,
$\cop(ab) = \cop(a) \cop(b)$, and the counit is linear
just as for bialgebras;
however, one of the change from Hopf algebra is the weakening of the axioms that $\cop$ and $\eps$  be unital, $\cop(1) \neq
1 \o 1$ and $\eps(1_W) \neq 1_k$, but must satisfy 
\begin{equation}
1\1 \o 1\2 \o 1\3 = (\cop(1) \o 1)(1 \o \cop(1)) =
(1 \o \cop(1))(\cop(1) \o 1)
\end{equation}
 and $\eps$ may not be multiplicative, $\eps(ab) \neq \eps(a) \eps(b)$ but must satisfy $(a,b,c \in W$) 
\begin{equation}
\eps(abc) = \eps(ab\1) \eps(b\2 c) = \eps(a b\2) \eps(b\1 c).
\end{equation}
There are several important projections that result from these axioms:
\begin{eqnarray}
\Pi^L(x) &:=& \eps(1\1 x) 1\2 \\
\Pi^R(x) &:=& 1\1 \eps(x 1\2) \\
\overline{\Pi}^L(x) & := & 1\1 \eps(1\2 x) \\
 \overline{\Pi}^R(x) & := & \eps(x 1\1) 1\2 \ \ \ \ \ (\forall \, x \in W)
\end{eqnarray}
We denote $W^L := \Im \Pi^L = \Im \overline{\Pi}^R$
and $W^R := \Im \Pi^R = \overline{\Pi}^L$. (These subalgebras
are separable $k$-algebras in the presence of an antipode.)

In addition to being a weak bialgebra, a weak Hopf algebra
has an antipode $S: W \to W$ satisfying the axioms
\begin{eqnarray}
S(x\1)x\2 & = & \Pi^R(x) \label{eq: bns2} \\
x\1 S(x\2) & = & \Pi^L(x) \label{eq: bns1} \\
S(x\1) x\2 S(x\3) & = & S(x) \label{eq: bnsa} \ \ \ \  (\forall \, x \in W)
\end{eqnarray}
The antipode is necessarily bijective (for finite dimensional
weak Hopf algebras), an anti-isomorphism of algebras with inverse
denoted by $\overline{S}$.  Note that  \cite[Nikshych-Vainerman]{NV}
use the notation $\eps_t = \Pi^L$ and $\eps_s = \Pi^R$.  Also the axioms of weak Hopf algebra are self-dual
and the $k$-dual algebra-coalgebra $W^*$ is shown to be a weak Hopf algebra. 

A \textit{left integral} $\ell$ in $W$ is defined
by $a \ell = \Pi^L(a) \ell$ for all $a \in W$,
is \textit{normalized} if $\Pi^L(\ell) = 1$.
The algebra $W$ is semisimple iff there is a normalized left integral
\cite[Theorem 3.13]{BNS}.  
The integral $\ell$ is \textit{nondegenerate} if the mapping $W^* \to W$,
defined by $\phi \mapsto \ell \leftharpoonup
\phi = \phi(\ell\1) \ell\2$ is a bijection. 
The antipodal concept is a \textit{right integral} $r \in W$, which satisfies $ra = r\Pi^R(a)$ for all $a \in W$, and is normalized if $\Pi^R(r) = 1$.
In a weak Hopf algebra which is a Frobenius algebra, the space of
left integrals $\mathcal{J}^L$ is a cyclic right $W^R$-module with nondegenerate cyclic
generator $\ell \in W$ \cite[Theorem 3.16]{BNS}, $\mathcal{J}^L = \ell W^R$.
Also the space of nondegenerate left integrals $\mathcal{J}^L_* =
\ell W^R_x$ where $W^R_x$ are the invertible elements in $W^R$ \cite[eq.\
(3.43)]{BNS}. 
    
A \textit{Haar integral}  $h \in W$
is a left and right normalized integral, necessarily unique if it exists
\cite[p.\ 423]{BNS}.  In this case, there is a left integral
$\lambda \in W^*$ such that $\lambda \rightharpoonup h = 1_W$ and so for
every $w \in W$, 
\begin{equation}
w = \overline{S}(h\1) \bra \lambda, wh\2 \ket, 
\end{equation}
dual bases $\{ h\2 \}$, $\{ \overline{S}(h\1 ) \}$ for the Frobenius 
homomorphism $\lambda: W \to k$. We moreover may choose $h$ to be
the cyclic generator, $\mathcal{J}^L = h W^R$ mentioned above.  

\begin{exa}    
\begin{rm}
Note from the axioms above that a Hopf algebra is automatically a weak Hopf algebra.  For a weak Hopf algebra
that is not a Hopf algebra, consider 
the groupoid algebra on $n$-objects
with one invertible arrow between each ordered pair of objects,
which is isomorphic to the $n \times n$ matrix algebra!  The weak Hopf algebra $W = M_n(k)$ has the following structure.  
Let $e_{ij}$ denote the $(i,j)$-matrix unit.  
The weak Hopf algebra structure on $M_n(k)$ has 
 counit given by $\eps(e_{ij}) = 1$, comultiplication by $\cop(e_{ij})
= $ $e_{ij} \o e_{ij}$ and antipode given by $S(e_{ij}) = e_{ji}$
for each $i,j = 1,\ldots,n$.  In this
case, $W^L = W^R$ and is equal to the diagonal matrices.  
The corresponding projections are given by
$\Pi^L(e_{ij}) = e_{ii}$ $= \overline{\Pi}^L(e_{ij})$ and $\Pi^R(e_{ij})= e_{jj}$
$= \overline{\Pi}^R(e_{ij})$. A Haar integral is given
by $h = \frac{1}{n} \sum_{i,j} e_{ij}$.  For example, if $n = 2$, the
Haar integral is 
$
h =  \left( \begin{array}{cc}
\frac{1}{2} &  \frac{1}{2} \\
\frac{1}{2} & \frac{1}{2} 
\end{array}
\right).
$
\end{rm}
\end{exa}

There are a number of important equations in the subject  (cf.\ \cite[2.8, 2.9, 2.24]{BNS}):
\begin{eqnarray}
\Pi^L & = & S \circ \overline{\Pi}^L  \label{eq: gotcha} \\
\Pi^R & = & S \circ \overline{\Pi}^R \label{eq: over2} \\
\overline{S}(a\2)a\1 & = & \overline{\Pi}^R(a) \label{eq: over1} \\
a\2 \overline{S}(a\1) &=& \overline{\Pi}^L(a) \\
a\1 \o \Pi^L(a\2) & = & 1\1 a \o 1\2  \\
\Pi^R(a\1) \o a\2 & = & 1\1 \o a 1\2  \label{eq: pi-are} \\
\Pi^R(a)b & = & b\1 \eps(ab\2)  \\
a\Pi^L(b) & = & \eps(a\1 b) a\2  \label{eq: pi-ell} \ \ \ \ \ (\forall \, a, b \in W) 
\end{eqnarray}
where e.g.\ eq.~(\ref{eq: over1}) follows from applying the inverse-antipode
to eqs.~(\ref{eq: over2}) and~(\ref{eq: bns2}). 

A $W$-module algebra $A$ is a $W$-module structure on $A$ such
that $w \cdot (ab) = (w\1 \cdot a)(w\2 \cdot b)$ 
and $w \cdot 1 = \Pi^L(w) \cdot 1$ for all $w \in W, a,b \in A$.
An algebra $B$ is a $W^*$-comodule algebra if $B$ is a comodule
via a coaction $\rho: B \to B \otimes W^*$, $\rho(b) = b\0 \otimes b\1$
such that $\rho(ab) = \rho(a)\rho(b)$ in the tensor product algebra $B \otimes W^*$ 
for $a,b \in B$ and $\rho(1) = 1\0 \otimes \Pi^L(1\1)$.
One shows as an exercise that $A$ is left $W$-module algebra if
and only if $A$ is right $W^*$-comodule algebra (via $w \cdot a =
a\0 \bra a\1, w \ket$ and dual bases).  Moreover, the
invariants $A^W = \{ b \in A \| w \cdot b = \Pi^L(w) \cdot b, \forall
w \in W \}$ form a subalgebra equal to the coinvariants $A^{\rm co \,  W^*}$
$= \{ b \in A \| b\0 \otimes b\1 = b\0 \otimes \Pi^L(b\1) \}$.
  
Denote the coinvariant subalgebra $B = A^{\rm co \,  W^*}$.  We have the
Galois mapping $\beta: A \otimes_B A \to A \otimes W^*$
given by ($x,y \in A$)
\begin{equation}
\beta(x \otimes y) = xy\0 \otimes y\1
\end{equation}
 If $\beta$ is an isomorphism, onto its image $(A \otimes W^*)\rho(1_A)$,
we say $A \| B$ is a \textit{weak Hopf-Galois extension}, or $W^*$-Galois extension \cite{CDG}. It is shown in several sources that $\beta$
surjective onto its image implies that $A_B$ is finite projective
and $\beta$ is injective; for example, see \cite[corollary 4.3]{ferrara}.  

Suppose $A$ is a $W$-module algebra.  A \textit{smash product} algebra
$A \# W$ is defined on the vector space $A \otimes_{W^L} W$, where
$W^L$ acts by multiplication from the left on $W$, while on $A$
from the right by ($w^L \in W^L$)
\begin{equation}
a \cdot w^L = \overline{S}(w^L) \cdot a = a(w^L \cdot 1).
\end{equation} 
Let $a \# w = a \otimes w^L \in A \otimes_{W^L} W$, then
$(a \# w)(b \# v) = a(w\1 \cdot b) \# w\2v$ for $a,b \in A$
and $v, w \in W$ \cite[4.2]{NV}.  The unit is $1_A \# 1_W$, e.g.,
$$(1_A \# 1_W)(a \# W) = (1\1 \cdot a) \# 1\2 w = a \cdot S(1\1) \otimes_{W^L}
1\2w = a \# w$$
from eq.~(\ref{eq: bns2}), and a half-dozen other exercises like this one
in the definition of smash product. 

The following may also be proven using Galois corings; therefore we
omit some details and concentrate on the 
main ideas in applying the main theorem \cite{KN}, formulated
in terms of Galois action, to our main theorem in the next section,
which uses the coaction picture of Galois theory for weak Hopf algebras.

\begin{theorem}
Suppose $W$ is a semisimple Hopf algebra with Haar integral $h$,
$A$ is a $W$-module algebra with invariant subalgebra $B = A^W$.
Then the Galois mapping $\beta: A \otimes_B A \to (A \otimes W^*)\rho(1_A)$
is an isomorphism if and only if the canonical mapping $
\pi: A \# W \to \End A_B$
given by $\pi(a \# w ) = \lambda_a \circ (w \cdot -)$ is an algebra isomorphism and $A_B$ is finite projective.  
\end{theorem}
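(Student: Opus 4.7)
My strategy is to transfer between the action and coaction pictures of Galois theory using the Frobenius structure on $W$ supplied by the Haar integral $h$ and its dual left integral $\lambda \in W^*$, together with the induced Frobenius-like structure on $A \| B$. The key intermediary is the map $E : A \to B$ defined by $E(a) = h \cdot a$; this lands in $B = A^W$ because $wh = \Pi^L(w) h$ forces $w \cdot E(a) = \Pi^L(w) \cdot E(a)$, and $E$ is visibly $B$-bilinear. Under either hypothesis I will promote $E$ to a Frobenius homomorphism whose dual bases $\{x_i\} \otimes_B \{y_i\} \in A \otimes_B A$ are manufactured either from $\beta^{-1}$ or from a finite projective basis of $A_B$; this gives the key identification $\End A_B \cong A \otimes_B A$ that bridges $\pi$ and $\beta$.

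For the forward direction, assume $\beta$ is bijective. By \cite[corollary 4.3]{ferrara}, $A_B$ is finite projective. Applying $\beta^{-1}$ to a suitable element of $(A \otimes W^*)\rho(1)$ built from $\lambda$ and $h$ yields Frobenius dual bases for $E$ in $A \otimes_B A$, so $\End A_B \cong A \otimes_B A$ via $f \mapsto \sum_i f(x_i) \otimes_B y_i$. Post-composing with $\beta$ lands this in $(A \otimes W^*)\rho(1)$, and after identifying $W^* \cong W$ via the Frobenius pairing determined by $\lambda$ and the dual bases $\{h\2\}, \{\overline{S}(h\1)\}$ and accounting for the $W^L$-balance, the resulting composite is the two-sided inverse of $\pi$. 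Conversely, assume $\pi$ is an algebra isomorphism and $A_B$ is finite projective. A finite projective basis for $A_B$ is pulled back through $\pi^{-1}$ to an expression in $A \# W$, and the Haar integral then furnishes an explicit candidate inverse of $\beta$, verified on generators by using the module-algebra and comodule-algebra axioms together with the correspondence between actions of $W$ and coactions of $W^*$ recalled before the theorem.

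The main obstacle is the non-unitality of the weak-Hopf comultiplication: the image of $\beta$ is the proper subbimodule $(A \otimes W^*)\rho(1) \subseteq A \otimes W^*$, while $A \# W = A \otimes_{W^L} W$ is balanced over the separable subalgebra $W^L$. Matching these precisely requires the identities $\Pi^L = S \circ \overline{\Pi}^L$, $a\1 \otimes \Pi^L(a\2) = 1\1 a \otimes 1\2$, and $\overline{S}(a\2) a\1 = \overline{\Pi}^R(a)$, together with the twisted right $W^L$-action $a \cdot w^L = \overline{S}(w^L) \cdot a$ used to define the smash product. The upshot is that under the $\lambda, h$-Frobenius duality between $W$ and $W^*$, the cutoff by $\rho(1)$ on the coaction side is precisely dual to the $W^L$-balance on the action side, so bijectivity of $\beta$ and of $\pi$ really are equivalent.
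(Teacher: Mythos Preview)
Your forward direction is the paper's argument in different packaging. The paper reads off that $\pi$ is an isomorphism from a commutative square whose left vertical is $\Hom(\beta, A)$ and whose right vertical is $\pi$, with $\Hom(\overline{A \otimes W^*}_A, A_A) \cong A \# W$ along the bottom; your route via the Frobenius homomorphism $E = h \cdot -$ and the identification $\End A_B \cong A \otimes_B A$ amounts to the same square viewed through a particular trivialization of the top arrow. Either way the point is that $A \otimes_{W^L} W \cong (A \otimes W^*)\rho(1)$ under the $\lambda$--$h$ pairing, and you have correctly identified that matching the $W^L$-balance with the cutoff by $\rho(1)$ is where the weak-Hopf identities enter.

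Your converse direction has a genuine gap. After pulling the projective-basis maps $\eta_i \in \Hom(A_B,B_B)$ back to $c_i = \pi^{-1}(\eta_i) \in A \# W$, you assert that ``the Haar integral then furnishes an explicit candidate inverse of $\beta$'', but you never explain why the $c_i$ involve $h$ at all, and this is the crux. The paper's argument is: from $\eta_i(A) \subseteq B$ one checks $w c_i = \Pi^L(w) c_i$ for every $w \in W$, so each $c_i$ lies in the invariants of the natural weak Hopf module structure on $A \# W$; the Fundamental Theorem for weak Hopf modules \cite[Theorem~3.9]{BNS} then forces $c_i \in A \otimes_{W^L} \mathcal{J}^L = A\, h W^R$. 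Only now does $1 \# 1 = \sum_i a_i c_i$ lie in the ideal $A h W^R A$, which under the bijection $\Psi: W^* \to W$, $\Psi(g) = h \leftharpoonup g$, is exactly surjectivity of $[\,,\,] = (\id_A \otimes \Psi) \circ \beta$ and hence of $\beta$. Without the Fundamental Theorem, or some equivalent structural identification of the $B$-valued part of $A \# W$, you have no mechanism to produce preimages under $\beta$. Your map $E$ does not help here on its own: in this direction you do not yet know that $A \| B$ is Frobenius, so you cannot invoke $\End A_B \cong A \otimes_B A$ to transport $\pi^{-1}$ into a candidate $\beta^{-1}$.
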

\begin{proof}[Sketch of Proof]
($\Leftarrow$) We carry over to weak Hopf algebras some traditional notations and ideas from
Hopf algebras in \cite[Montgomery, chs.\ 4, 8]{M}.  Let $A$ be identified
with $A \# 1_W$ and $W$ with $1_A \# W$.  We first note that
$AhW^RA$ is a two-sided ideal in $A \# W$: it is a left ideal since
$$ (a \# w)(b \# h) = a (w\1 \cdot b) \# w\2 h = a(w\1 \cdot b)(\Pi^L(w\2) \cdot 1) \# h = a(w \cdot b) \# h$$
and a right ideal since 
\begin{eqnarray*}
(1 \# w\2)(\overline{S}(w\1) \cdot a \# 1_W) & = & w\2 \overline{S}(w\1) \cdot a \# w\3 \\
& = & \overline{\Pi}^L(w\1) \cdot a \# w\2 \\
& = & a \# \Pi^L(w\1)w\2 = a\# w 
\end{eqnarray*}
and so
\begin{eqnarray*}
(a \# h)(b \# 1_W)(c \# w) & = & (a \# h) (b c \cdot S(1\1)1\2 \# w) \\
& = & (a \# w) (bc \# w) \\
& = & (a \# h) (1 \# w\2)(\overline{S}(w\1)\cdot (bc) \# 1_W) \\
& = & (a (h\1 \cdot 1_A)  \# h\2 w\2)(\overline{S}(w\1)\cdot (bc) \# 1_W) \\
& = & (a \# \Pi^L(h\1)h\2 w\2) (\overline{S}(w\1)\cdot (bc) \# 1_W) \\
& = & (a \# h \Pi^R(w\2)(\overline{S}(w\1)\cdot (bc) \# 1_W) 
\end{eqnarray*}

Note that $bh = hb$ for $b \in B$ 
by a computation involving \cite[eqs.\ (2.25b), (2.8b)]{BNS}.  Thus the mapping $[,]: A \otimes_B A \to A\# W$
given by $x \otimes y \mapsto xhy$ is well-defined
with image $AhW^RA$.  Note that
it is surjective if and only if the Galois mapping $\beta$ is surjective:
let $\Psi: W^* \to W$ be the bijection $\Psi(g) = h \leftharpoonup g$,
then $[,] = (\id_A \otimes \Psi) \circ \beta$ since
\begin{eqnarray*}
xy\0 \otimes (t \leftharpoonup y\1) & = & x y\0 \bra y\1, h\1 \ket \otimes h\2 \\
& = & x(h\1 \cdot y) \otimes h\2 = xhy.
\end{eqnarray*}

Choose projective bases $\{ a_i \}_{i=1}^n \subset A$
and $\{ \eta_i \}_{i=1}^n \subset \Hom (A_B,B_B) \subset \End A_B$
such that $\sum_{i=1}^n a_i \eta_i = \id_A$.  Next choose $c_i \in A \# H$
such that $\pi(c_i) = \eta_i$. Then $\pi(\sum_{i=1}^n a_i c_i) = \id_A$,
hence $\sum_{i=1}^n a_i c_i = 1_A \# 1_W$.  

Note that $w  c_i = \Pi^L(w) c_i$ in $A \# W$, since for $a \in A$
we have $\eta_i(a) \in B$, so
$$ \pi(w c_i)(a) = w \cdot \eta_i(a) = \Pi^L(w) \cdot \eta_i(a) = 
\pi(\Pi^L(w) c_i). $$

We note then that $c_i$ is in the invariants of the weak Hopf module
$A \# W$ where right $W$-comodule structure is given by
$a \# w \mapsto a \# w\1 \otimes w\2$ and left $W$-module
structure by $w \cdot (a \# w') = (w\1 \cdot a ) \# w\2 w' $.  
 By the Fundamental Theorem \cite[Theorem 3.9]{BNS}
applied  to right Hopf modules over $(W^{\rm op}, \cop, \overline{S})$)
this is then isomorphic to the trivial weak Hopf module $A \otimes_{W^L} W$.
The invariants then satisfy $\mbox{Inv}\, A \# W = A \otimes_{W^L} \mathcal{J}^L$. Whence $c_i \in \mathcal{J}^LA = h W^RA$
and $1_A \# 1_W = \sum_{i=1}^N a_i c_i$ is in the ideal $A hW^R A$.
Thus the pairing $[,]$ is surjective.  It follows that
the Galois mapping is surjective, therefore injective.

($\Rightarrow$) This is based on the commutative square
$$\begin{diagram}
\Hom (A \otimes_B A_A, A_A) &&  \rTo^{\cong}&& \End A_B \\
\uTo^{\Hom(\beta,A)} && && \uTo_{\pi}    \\
 \Hom (\overline{A \otimes W^*}_A, A_A) &&  \lTo_{\phi}   && A \# W
\end{diagram}$$
with three isomorphisms, the mapping $\phi$ being given
by $\phi(a \otimes w)(x1\0 \otimes w^*1\1) = ax\0 \bra x\1 S(w^*), w \ket$.
The module structure $\overline{A \otimes W^*}_A$ is given
by $(a 1\0 \otimes w^* 1\1) \cdot b = a b\0 \otimes w^* b\1$.
An inverse mapping is given by $g \mapsto g(1_A \otimes \overline{S}(-))$
in $\mbox{Hom}_{-W^L} (W^*,A) \cong A \otimes_{W^L} W$.  The top arrow is given by
$f \mapsto f( - \otimes_B 1_A)$ with inverse $$ \alpha \longmapsto
(a \otimes_B a' \mapsto \alpha(a)a'). $$
Since $\beta$ is an isomorphism it follows that $\pi$ is an isomorphism
of endomorphism ring with smash product and $A_B$ is finite projective
\cite[cor.\ 4.3]{ferrara}.  
\end{proof}

\section{Depth two Hopf subalgebras are normal}
The following is the main theorem in this paper.

\begin{theorem}
Let $k$ be an algebraically closed field of characteristic zero.
A depth two Hopf subalgebra $K$ of a semisimple Hopf $k$-algebra
is normal.
\end{theorem}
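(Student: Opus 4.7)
The plan is to leverage the Galois--coaction picture built up in Sections~2--4 to produce a (nonunital) algebra homomorphism $\pi\colon H\to W$ into a weak Hopf algebra whose kernel equals both $HK^+$ and $K^+H$, yielding normality of $K$ by the characterization recalled in Section~2. First I would assemble the hypotheses: by Theorem~2.1 the extension $H\|K$ is a symmetric Markov extension, which together with the depth two hypothesis and Theorem~3.1 (equivalence of the two D2 conditions on a Frobenius extension) places $H\|K$ in the exact setting of the main theorem of \cite{KN}. Passing through the coaction--side equivalence of Theorem~4.1, this furnishes a semisimple weak Hopf algebra $W$ (regular, with Haar integral), a right $W$-comodule algebra structure $\rho\colon H\to H\o W$, $\rho(a)=a\0\o a\1$, whose coinvariant subalgebra is $K$, and a Galois canonical isomorphism $\beta\colon H\o_K H\xrightarrow{\cong}(H\o W)\rho(1_H)$, $\beta(x\o_K y)=xy\0\o y\1$.

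Now I would define
\[
\pi\colon H\longrightarrow W,\qquad \pi(a)=(\eps_H\o\id_W)\rho(a)=\eps_H(a\0)\,a\1.
\]
Because $\eps_H$ is multiplicative on the Hopf algebra $H$ and $\rho$ is an algebra map, $\pi$ is a (generally nonunital) algebra homomorphism, with $\pi(1_H)=\eps_H(1\0)\,1\1$ an idempotent of $W$. For $x\in K$ the coinvariance condition yields $\rho(x)=x\cdot\rho(1_H)$, hence $\pi(x)=\eps_H(x)\,\pi(1_H)$; in particular $\pi$ annihilates $K^+=\ker\eps_K$, and since $\pi$ is two-sided multiplicative, both $HK^+$ and $K^+H$ lie in $\ker\pi$.

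The main obstacle is the reverse containment. Because $H$ is free of rank $n=[H:K]$ both as a left and as a right $K$-module, both $HK^+$ and $K^+H$ have $k$-dimension $\dim H-n$; it therefore suffices to show $\dim_k\pi(H)=n$. The upper bound $\dim\pi(H)\le n$ is immediate from $HK^+\subseteq\ker\pi$. For the matching lower bound I would exploit the factorization $\pi=(\eps_H\o\id_W)\circ\beta\circ\iota$, where $\iota\colon H\to H\o_K H$, $a\mapsto 1\o_K a$, is injective, together with the Frobenius dual-bases tensor $S(\Lambda\2)\o_K\Lambda\1\in(H\o_K H)^K$ of Section~2 and the Haar-integral structure on $W$ from Section~4. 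The Galois bijection $\beta$ should transport an $n$-dimensional independent family (coming from a basis of the $K$-free module $H_K$) into $\pi(H)\subseteq W$ without collapse, forcing $\dim\pi(H)\ge n$. Granting this, $\ker\pi$, $HK^+$, and $K^+H$ all share the dimension $\dim H-n$ and satisfy the containments $HK^+,\,K^+H\subseteq\ker\pi$, so all three spaces coincide; this yields $HK^+=K^+H$ and hence the normality of $K$ in $H$.
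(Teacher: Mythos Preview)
Your proposal follows the paper's route almost exactly: your map $\pi$ is the paper's $\Phi$, and the endgame---matching dimensions to force $HK^+=\ker\pi=K^+H$---is the same. The one genuine gap is the lower bound $\dim\pi(H)\ge n$, which you leave at the level of ``should transport \ldots\ without collapse.'' Your proposed factorization $\pi=(\eps_H\o\id_W)\circ\beta\circ\iota$ does not by itself give this, because $\eps_H\o\id_W$ is highly non-injective, and neither the Frobenius dual-bases tensor for $H\| K$ nor the Haar integral of $W$ is what closes the gap.

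The paper's device is to feed the two-sided integral $\Lambda_H\in H$ into the \emph{first} slot rather than $1$: integrality gives
\[
\beta(\Lambda_H\o_K a)=\Lambda_H\,a\0\o a\1=\eps_H(a\0)\,\Lambda_H\o a\1=\Lambda_H\o\pi(a).
\]
Thus $\beta$ restricts to an injection of $\Lambda_H\o_K H$ into $\Lambda_H\o\pi(H)$. Since $k\Lambda_H\cong k_\eps$ as a right $K$-module and ${}_KH\cong K^n$ by Nichols--Zoeller, one has $\dim_k(\Lambda_H\o_K H)=n$, and hence $\dim\pi(H)\ge n$. With this single move your argument is complete and coincides with the paper's. (Your direct computation of $\dim H/HK^+=\dim H/K^+H=n$ from freeness is in fact a bit slicker than the paper's detour through the Schneider canonical isomorphisms, but the content is the same.)
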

\begin{proof}
By theorem in section~2 $H \| K$ is a symmetric Markov extension.
By hypothesis, $H \| K$ is depth two, also in the sense of depth
two Frobenius extension in \cite[Kadison-Nikshych]{KN} as shown in
section~3. It follows from \cite[Theorem 4.6]{KN} that $H \| K$ is a weak Hopf-Galois
extension as noted in section~4, since $H_K$ is finite free and $\End H_K$
is isomorphic to the smash product of $H$ with the weak Hopf algebra
$W = \End {}_KH_K$ (isomorphic to the weak Hopf algebra
denoted by $A$ in \cite{KN}).  
As a consequence, there is a right $W$-comodule algebra structure on $H$
denoted by $a \mapsto a\0 \otimes a\1$
with Galois isomorphism $\beta: H \otimes_K H \to H \otimes W$
given by $\beta(a \otimes b) = ab\0 \otimes b\1$,
with image $H1\0 \otimes W1\1$.  

Consider the following mapping of $\Phi: H \to W$, defined by
$\Phi(a) = \eps_H(a\0)a\1$.  The mapping $\Phi$ is a possibly non-unital algebra homomorphism since the counit $\eps_H: H \to k$ is algebra homomorphism and the coaction on $H$ is non-unital homomorphic,
where $(ab)\0 \otimes (ab)\1 = a\0 b\0 \otimes a\1 b\1$ but
$1\0 \otimes 1\1$ does not need to equal $1_H \otimes 1_W$ (but is idempotent). 

Let $K^+$ denote $\ker \eps_H \cap K$.  Note that  the two-sided ideal $HK^+H$ contains
$HK^+$ and $K^+H$ and is contained in $\ker \Phi$:
if $y \in K^+$, we note that $\Phi(y) = \eps_H(y1\0) 1\1 = 0$
since $y \in  H^{\rm co \, W} \cap \ker \eps_H$.

Note $\beta(\Lambda_H \otimes a) = \Lambda_H a\0 \otimes a\1 = \Lambda_H \otimes \Phi(a) \in \Lambda_H \otimes \Im \Phi$. If $H \cong K^n$ as natural
left $K$-modules (Nichols-Zoeller freeness theorem!), we
note $k\Lambda_H \otimes_K H \cong k^n$ since $k\Lambda_H \cong
k_{\eps}$ as right $K$-modules and $k \otimes_K K \cong k$. It follows
that $\dim \Im \Phi = n$. 

Now consider the Schneider canonical isomorphism 
$\overline{\beta}: H \otimes_K H \stackrel{\cong}{\longrightarrow} H \otimes (H/K^+H)$ given by formula $\overline{\beta}(a \otimes_K b) = ab\1 \otimes \overline{b\2}$, where $x \mapsto \overline{x}$ denotes the canonical
projection of element into its coset, $H \to H/ K^+H$, in the quotient
right module-coalgebra \cite{HJS, FMS,M}. (Of course 
$y - \eps(y)1 \in K^+$ for $y \in K$.) 
A similar computation ensues:
$\overline{\beta}(\Lambda_H \otimes a) = \Lambda_H \otimes \overline{a}$,
hence $\Lambda_H \otimes_K H \cong \Lambda_H \otimes H/K^+H$,
which implies that also $\dim H/K^+H = n$.

Similarly the left-handed Schneider canonical isomorphism
$\underline{\beta}: H \otimes _K H \to (H/HK^+) \otimes H$
defined by $\underline{\beta}(a \otimes b) = \overline{a\1} \otimes a\2 b$.
From this isomorphism applied to $a \otimes_K \Lambda_H$, it follows
that $\dim H/HK^+ = n$ as well.

Finally since $HK^+H \subseteq \ker \Phi$, the homomorphism
$\Phi$ induces $H/K^+H \to \Im \Phi$, an isomorphism since
$\dim \Im \Phi = n = H/K^+H$.  Similarly $\Phi$ induces
$H / HK^+ \stackrel{\cong}{\longrightarrow} \Im \Phi$.  
Both isomorphisms factor through $H / HK^+H \to \Im \Phi$,
also induced from $\Phi$ and necessarily an isomorphism.
It follows that $HK^+ = HK^+H = K^+H$, hence $K$ is normal
Hopf subalgebra of $H$.  
\end{proof} 
 
\begin{remark}
\begin{rm}
It would be interesting to obtain a proof of this theorem
or a generalization using character theory, perhaps combining
and building on the ideas in \cite{SB2} and \cite{KK}.

Another question is to ask if the proof above generalizes to
arbitrary Hopf algebras $K \subseteq H$.  The main theorem of depth two theory \cite[2.1, 5.1]{ferrara, LK} 
tells us that $H \| K$ is Galois in a more generalized sense.  
Now $H$ is a comodule algebra
with respect to an $R$-bialgebroid structure on $T = (H \otimes_K H)^K$,
where the coaction sends $H \to H \otimes_R T$.  We see that there are already problems in the definition of $\Phi$. 

The theorem and any future generalization is of potential interest to the Galois correspondence problem
for Galois extensions w.r.t.\ bialgebroids.  For example, one asks what
are the analogous results to those in field theory where normal subgroups of the Galois group correspond  to normal intermediate field extensions? 
\end{rm}
\end{remark} 
     
 By putting the main theorem together
with example~\ref{exa-nha} we have
the following characterization of normal
Hopf subalgebra:

\begin{cor}
Suppose $H$ is a semisimple Hopf algebra
over $k$.  Then a Hopf subalgebra $K$ is
depth two if and only if it is a normal
Hopf subalgebra of $H$.  
\end{cor}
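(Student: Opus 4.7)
The proof is essentially a matter of assembling two facts that have already been established in the paper, so the plan is simply to cite them in the two directions. For the forward implication, if $K$ is a depth two Hopf subalgebra of the semisimple Hopf algebra $H$, then the preceding main theorem applies verbatim and yields that $K$ is normal in $H$. There is nothing to add beyond the citation, since the hypotheses of that theorem are precisely the hypotheses assumed here.

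For the reverse implication, suppose $K$ is a normal Hopf subalgebra of $H$. The plan is to invoke Example~\ref{exa-nha}, where it was shown via a duality argument that any normal Hopf subalgebra of a Hopf algebra is depth two. More explicitly, the quotient map $\pi : H \to L = H /\!/ K = H / HK^+$ is codepth two as a coalgebra homomorphism by \cite[Cor.\ 5.4]{anchor}, hence the dual inclusion $L^* \hookrightarrow H^*$ is depth two by \cite[Theorem 5.5]{anchor}. Applying the same argument to $L^*$ (which is normal in $H^*$ by \cite[Remark 2.1]{SB}) and dualizing once more gives that $K \cong (H^* /\!/ L^*)^*$ is depth two in $H^{**} = H$. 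This argument made no use of semisimplicity, so it applies in our setting a fortiori.

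The only subtlety worth flagging is that the characterizations of normality used in the two directions must be the same one. The main theorem produces normality in the form $HK^+ = K^+H$, while Example~\ref{exa-nha} consumes normality in the form that $H /\!/ K$ is a well-defined quotient Hopf algebra; but these are equivalent for Hopf subalgebras of semisimple Hopf algebras, as recorded at the start of subsection~2.1. So the combination is clean and the corollary follows with no additional work. I expect no real obstacle: the entire burden of the proof lies in the main theorem just established and in the cited duality from \cite{anchor}.
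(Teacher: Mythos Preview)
Your proposal is correct and matches the paper's own argument exactly: the paper simply states that the corollary follows by combining the main theorem (depth two $\Rightarrow$ normal) with Example~\ref{exa-nha} (normal $\Rightarrow$ depth two). Your additional remark on the equivalence of the two characterizations of normality is a sensible clarification but not a departure from the paper's approach.
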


Using the characterization of Galois extensions
w.r.t.\ a bialgebroid as  depth two,
balanced extensions in \cite[2.1, 5.1]{ferrara, LK}
with a note that $H_K$ free implies $H_K$
balanced module, we have a proof of 

\begin{cor}
Suppose $H$ is a semisimple Hopf $k$-algebra.  Then a Hopf subalgebra $K$ is
the coinvariant subalgebra under a Galois coaction if and only if
$K$ is a normal Hopf subalgebra of $H$.  
\end{cor}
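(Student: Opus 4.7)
The plan is to derive this corollary as a direct consequence of the main Theorem~5.1 of this paper combined with the bialgebroid-Galois characterization of depth two from \cite{ferrara, LK}, treating the two implications separately.

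For the forward direction (Galois coaction $\Rightarrow$ normal), I would invoke the characterization from \cite[2.1, 5.1]{ferrara, LK} stating that an extension $H \| K$ is Galois with respect to a bialgebroid coaction precisely when it is depth two and the module $H_K$ is balanced. The Nichols–Zoeller theorem (already used in the proof of Theorem~5.1) gives that $H_K$ is free of finite rank, and any finite free module is automatically balanced (the natural map from $K$ to the double centralizer $\mathrm{End}({}_{\mathrm{End}(H_K)} H)$ is an isomorphism). Hence the Galois assumption collapses to the depth two assumption, and Theorem~5.1 applied to $H \| K$ forces $K$ to be a normal Hopf subalgebra.

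For the reverse direction (normal $\Rightarrow$ Galois coaction), I would use Example~\ref{exa-nha}, which exhibits a normal Hopf subalgebra of a Hopf algebra as depth two; combined with the fact that when $K$ is normal the quotient Hopf algebra $H /\!/ K = H/HK^+$ coacts on $H$ making $H \| K$ a classical Hopf-Galois extension (as cited already from \cite[Theorem 3.1]{ferrara}). A Hopf-Galois extension is in particular a bialgebroid-Galois extension (a Hopf algebra being a bialgebroid over its ground field), so $K = H^{\mathrm{co}\, H/\!/K}$ is the coinvariant subalgebra of a Galois coaction. Alternatively, once depth two and balancedness are in hand, one may invoke \cite[2.1, 5.1]{ferrara, LK} directly to produce a bialgebroid coaction with coinvariants $K$.

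The only nontrivial ingredient is the equivalence ``depth two + balanced $\Leftrightarrow$ bialgebroid-Galois'' from \cite{ferrara, LK}, which is cited rather than reproved here; once that is granted, the proof is a short bookkeeping argument. No genuine obstacle remains beyond keeping straight that the ``Galois coaction'' in the statement is interpreted in the bialgebroid sense of those references, rather than the more restrictive Hopf or weak Hopf sense, so that both implications land in the same framework.
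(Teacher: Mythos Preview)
Your proposal is correct and follows essentially the same route as the paper: invoke the equivalence from \cite[2.1, 5.1]{ferrara, LK} between bialgebroid-Galois and depth two plus balanced, observe that freeness of $H_K$ (Nichols--Zoeller) gives balancedness for free, and then appeal to the equivalence of depth two and normality (Theorem~5.1 together with Example~\ref{exa-nha}, i.e.\ the preceding corollary). Your reverse direction via the explicit Hopf--Galois structure over $H/\!/K$ is a valid alternative, though the paper implicitly uses the bialgebroid characterization in both directions.
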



\end{document}